\newtheorem{theorem}{\hskip\parindent\bf Theorem}[section]
\newtheorem{lemma}{\hskip\parindent \bf Lemma}[section]
\newtheorem{remark}{\hskip\parindent\bf Remark}[section]
\newtheorem{proposition}{\hskip\parindent\bf Proposition}[section]
\def\bc{\begin{center}}
\def\ec{\end{center}}
\numberwithin{equation}{section}{}
\title{ A Phase-Field Model for Vesicle Membranes Incorporating Area-Difference Elasticity }
\author{Yihong Liang\thanks{School of Mathematics and Physics, University of Sciences and Technology Beijing, Beijing, 100083, PR China ({\tt liangyihong99@163.com }).}\ \ ,
Emine Celiker \thanks{School of Engineering, University of Leicester, University Road, Leicester, LE1 7RH, United Kingdom ({\tt Corresponding Author:ec403@leicester.ac.uk }). }\ \
and
Ping Lin \thanks{Division of Mathematics, University of Dundee, Dundee DD1 4HN, United Kingdom ({\tt Corresponding Author:p.lin@dundee.ac.uk }).}
}
\date{}
\begin{document}
\maketitle
\abstract{
This paper presents a phase-field model for simulating the three-dimensional deformation of vesicle membranes, incorporating area-difference elasticity, with constraints on bulk volume and surface area.
We develop efficient numerical schemes based on the Fourier-spectral method for spatial discretization and temporal evolution. The model successfully captures a wide variety of steady-state vesicle shapes.
The numerical experiments demonstrate that by tuning the simulation parameters, the vesicle can transition from a simple discocyte shape to a complex, multi-armed starfish-like and nested configuration .
These results highlight the crucial role of area-difference elasticity in determining vesicle morphology.
}

{\noindent\bf Keywords --}
Vesicle membrane; Area-difference elasticity; Phase-field model; Elastic bending energy; Numerical simulation;
\section{Introduction}
Biological membranes, which define the boundaries of cells and most internal organelles, are crucial for isolating cellular internal structures from the external environment. They have long been a focus of considerable interest among biologists, chemists, and physicists \cite{evans1987physical, edidin2003lipids, alberts2015molecular}.
Typically composed of phospholipids and proteins, these membranes form a highly organized bilayer structure. At the molecular level, this structure displays extraordinary complexity and exhibits various dynamic properties. As a consequence of this complex structure, the mathematical modelling and simulation of the dynamic shape changes can be very challenging. Hence, the construction of efficient mathematical formulations and numerical methods is still a necessity.

To better understand the physicochemical properties of biological membranes and their physiological functions, vesicles are often used as model systems \cite{seifert1997configurations,lipowsky1995structure}.
Studying vesicles not only deepens our understanding of biological membranes but also facilitates advances in biomimetic research \cite{Discher2002}
The processes of vesicle formation and evolution, including complex phenomena like budding and vesiculation, are intimately linked with their physicochemical properties.
Experimental studies on the mechanisms underlying vesicle formation have been explored in references \cite{zhao2013heterogeneously,mcmahon2005membrane,elliott1997limit}.

Over the past few decades, numerous theoretical models have been proposed to describe the bending behavior of biological membranes.
A groundbreaking contribution by Canham, Evans and Helfrich \cite{canham1970minimum,chadwick2002axisymmetric,helfrich1973elastic}
introduced the classic Helfrich bending energy model, also known as the sharp-interface elastic bending energy.
This model describes the total energy of the membrane in terms of the square of its mean curvature and a Gaussian curvature term. It has been instrumental in explaining the shapes of biological structures, such as red blood cells. The general elastic bending energy is derived from the Hooke's Law:
\begin{equation}
E=\int_{\Gamma}\left(a_1+a_2(H-c_0)^2+a_3G\right) ds,
\end{equation}
where $a_1$ represents the surface tension, which accounts for the interaction effects between the vesicle material and its surrounding fluid, $H = \frac{c_1 + c_2}{2}$ is the mean curvature of the membrane surface, with $c_1$ and $c_2$ as the principal curvatures, and $G =c_1c_2$ is the Gaussian curvature. The parameters $a_2$ and $a_3$ represent the bending rigidities, determined by the properties of the materials forming the membrane.
The spontaneous curvature is denoted as $c_0$.
This model and its variants have been widely applied to study various phenomena, including vesicle deformation and protein insertion into membranes.
However, challenges arise in simulating topological changes, such as budding or fusion, as the sharp-interface approach requires complex parameterizations that are difficult to handle numerically \cite{seifert1991shape,khairy2011minimum}.

To overcome these challenges, Du and his collaborators proposed a phase-field-based mathematical model to simulate membrane deformation driven by bending energy \cite{du2004phase,du2005retrieving,du2005modeling,wang2005phase,du2006simulating}.
They used a simplified form of the bending energy of the form
\begin{equation}\label{1.2}
E_{elastic}=\frac{\kappa}{2}\int_{\Gamma}(H-c_0)^2 ds.
\end{equation}
The phase-field model uses a smooth phase field function to naturally represent the membrane interface, avoiding the difficulties associated with explicit interface tracking and facilitating spontaneous topological changes \cite{du2004phase,du2005retrieving,du2005modeling,wang2005phase,du2006simulating,du2007convergence}.
However, their model did not account for the area difference between the inner and outer leaflets of the membrane.

Among the various membrane properties, area-difference elasticity (ADE) plays a crucial role in determining the shape and stability of vesicles \cite{miao1994budding}.
The elastic behavior of a bilayer vesicle is governed by both its bending rigidity and constraints on the membrane's surface area and volume. The ADE model \cite{seifert1997configurations,miao1994budding}
suggests that the area difference between the inner and outer leaflets of the lipid bilayer is a key determinant of the vesicle's shape.
The energy functional for this model is defined as:
\begin{equation}\label{1.3}
E_{ADE}=E_{elastic}+\frac{\overline{\kappa}}{2}\frac{\pi}{AD^2}(\Delta A-\Delta A_0)^2.
\end{equation}
Here, $\Delta A_0$ is the relaxed area difference between the two leaflets of the plasma membrane and $\bar{\kappa}$ is a constant representing the bending elastic moduli of the vesicle. The parameter $A$ represents the membrane area, which is assumed to be constant based on the initial condition for the vesicle.
The area difference $\Delta A$ arises from membrane curvature or variations in molecular composition, imposing energetic constraints on the vesicle's shape and dynamics \cite{mukhopadhyay2002echinocyte}. 
Thus, research into vesicles incorporating ADE not only reveals the physical mechanisms governing vesicle-related biological processes but also provides theoretical guidance for applications such as drug delivery, biomimetic material design, and biomechanics \cite{lipowsky1995structure,Discher2002,mukhopadhyay2002echinocyte,salva2013polymersome,hollo2021shape}.

This work aims to apply a phase field model for vesicles that incorporates area-difference elasticity, extending existing phase-field frameworks.
By introducing constraint terms for the membrane surface area and volume, we aim to create a numerical model that more closely mimics real biological membrane systems, thereby enabling more accurate simulations and predictions of vesicle dynamics in complex biological environments.
Through systematic numerical simulations, we demonstrate the model's capability to reproduce a wide spectrum of vesicle shapes observed in experiments, driven by the interplay between bending energy and area-difference elasticity.
Accordingly, in Section 2 we introduce the phase-field formulation including the ADE term and its theoretical justification, in Section 3 we introduce the numerical scheme and carry out its existence analysis, Section 4 is devoted to numerical experiments and finally in Section 5 we give concluding remarks.

\section{Phase field model}
First, we introduce a phase function $\phi(x)$, defined on the computational domain $\Omega$  used to label the inside and outside of the vesicle $\Gamma$. The level set $\{x:\phi(x)=0\}$ represents the membrane, while $\{x:\phi(x)>0\}$ represents the interior of the membrane and  $\{x:\phi(x)<0\}$ the exterior. We define the following modified elastic energy:
\begin{equation}\label{2.1}
W(\phi)=\int_{\Omega}\frac{\kappa\epsilon}{2}\left|\Delta \phi-\frac{1}{\epsilon^2}(\phi^2-1)(\phi+C\epsilon)\right|^2 dx,
\end{equation}
where $\Omega\in\mathbf{R}^3$, $\epsilon$ is defined as the vesicle membrane thickness and $C$ is $\sqrt{2}$ times of the spontaneous curvature $c_0$. $\kappa$ is the known bending elastic moduli \cite{du2006simulating}.

Following \cite{du2006simulating},
the energy (\ref{2.1}) asymptotically converges to the Helfrich-type bending energy (\ref{1.2}), up to a constant factor. The detailed derivation can be found in \cite{du2006simulating}
and is omitted here for brevity.

Moreover, the following functional:
\begin{equation}
V(\phi)=\int_{\Omega}\frac{\phi(x)+1}{2}dx
\end{equation}
goes to the difference between the inside volume and outside volumes. Given the functional
\begin{equation}
B(\phi)=\int_{\Omega}\frac{\epsilon}{2} |\nabla\phi|^2+\frac{1}{4\epsilon}(\phi^2-1)^2 dx,
\end{equation}
the surface area of $\Gamma$ can be denoted as $A(\phi)=\frac{3\sqrt{2}}{4}B(\phi)$.

Next, the phase-field formulation of the second term in equation (\ref{1.3}) is required,
namely
\begin{equation}\label{2.4}
\frac{\overline{\kappa}}{2}\frac{\pi}{AD^2}(\Delta A-\Delta A_0)^2.
\end{equation}

As defined in Section 1, $\bar{\kappa}$ is a constant representing the bending elastic moduli of the vesicle. The parameter $A$ represents the membrane area, which was based on the initial condition for the vesicle.
In this model, $\Delta A_0$, defined as the relaxed area difference between the inner and outer leaflets of the plasma membrane, serves as a crucial parameter.

In \cite{mukhopadhyay2002echinocyte}, the two leaflets of a closed bilayer with fixed interleaflet separation $D$ are required by
geometry to differ in area by an amount
\begin{equation}\label{2.5}
\Delta A=D\int d\mathcal{A}(c_1+c_2)=D\int d\mathcal{A}2H.
\end{equation}
The parameter $D$ holds a clear physical meaning: it is the separation distance between the neutral surfaces of the two leaflets composing the bilayer, corresponding to roughly two-thirds the total bilayer thickness in \cite{miao1994budding}.
$c_1$ and $c_2$ are the two local principal curvatures, and $H =\frac{c1 + c2}{2}$ is the mean curvature.
In the phase-field formulation, the integral for $\Delta A$, originally defined on the membrane surface in (\ref{2.5}), is extended to the entire computational domain $\Omega$:
\begin{equation*}
\Delta A=2D\int_{\Omega}Hd\Omega.
\end{equation*}
Consequently, a phase-field representation of the mean curvature $H$ is required.

Based on the proof given in \cite{du2005modeling}, since
\begin{equation*}
H=-\frac{\sqrt{2}\epsilon}{2(1-\phi^2)}\left(\Delta\phi+\frac{1}{\epsilon^2}\phi(1-\phi^2)\right),
\end{equation*}
and
\begin{equation*}
\int_{-\infty}^{+\infty}(1-\phi^2)^2dx=\frac{4\sqrt{2}\epsilon}{3},
\end{equation*}
we have
\begin{align}\label{2.6}
\Delta A &\sim 2D\times\frac{3}{4\sqrt{2}\epsilon}\int_{\Omega}(1-\phi^2)^2\times\frac{-\epsilon}{\sqrt{2}(1-\phi^2)}\left(\Delta\phi+\frac{1}{\epsilon^2}\phi(1-\phi^2)\right)dx\notag\\
&=-\frac{3D}{4}\int_{\Omega}\left((1-\phi^2)\Delta\phi+\frac{1}{\epsilon^2}\phi(1-\phi^2)^2\right)dx
\end{align}

Hence, the second term of equation (\ref{1.3}) is redefined as
\begin{align} \label{2.7}
G(\phi):&=\frac{\overline{\kappa}}{2}\frac{\pi}{A_0 D^2}(\Delta A-\Delta A_0)^2\notag\\
 &=\frac{\overline{\kappa}}{2}\frac{\pi}{A_0 D^2}
 \left( -\frac{3D}{4}\int_{\Omega}
 \left((1-\phi^2)\Delta\phi+\frac{1}{\epsilon^2}\phi(1-\phi^2)^2\right)dx
 -\Delta A_0\right)^2,
\end{align}
and
\begin{equation*}
E_{ADE}(\phi)= W(\phi)+G(\phi).
\end{equation*}

\vspace{3mm}
The following Proposition 2.1 is adapted from Proposition 3.1 in \cite{du2007convergence}, where they derived the existence of a minimizer for the phase-field model with elastic bending energy. We have extended this result to incorporate the ADE term in our model, which is essential for capturing the behavior of vesicle membranes in our formulation.
\begin{proposition}
Let $S$ denote the feasible set of $\phi\in H^2(\Omega)$ such that $V(\phi)=\alpha$ and $A(\phi)=\beta$. If for some suitable $\alpha$ and $\beta$, $S$ is non-empty, then there exists a $\phi^*\in S$ minimizing $E_{ADE}(\phi)$.
\end{proposition}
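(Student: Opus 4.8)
The plan is to use the direct method in the calculus of variations. First I would take a minimizing sequence $\{\phi_n\}\subset S$ for $E_{ADE}$, so that $E_{ADE}(\phi_n)\to\inf_{S}E_{ADE}$. Since $W(\phi_n)$ is bounded and each term in $E_{ADE}$ is nonnegative, the sequence $\|\Delta\phi_n - \frac{1}{\epsilon^2}(\phi_n^2-1)(\phi_n+C\epsilon)\|_{L^2(\Omega)}$ is bounded; combined with the area constraint $A(\phi_n)=\beta$ (which controls $\|\nabla\phi_n\|_{L^2}$ and $\|\phi_n^2-1\|_{L^2}$, hence via the volume constraint and a Poincaré-type argument gives an $L^2$ bound on $\phi_n$ itself), this should yield a uniform $H^2(\Omega)$ bound on $\phi_n$. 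The argument here mirrors the one in \cite{du2007convergence}: the key point is that the nonlinear lower-order term $\frac{1}{\epsilon^2}(\phi_n^2-1)(\phi_n+C\epsilon)$ must be shown to be bounded in $L^2$, which follows from controlling $\|\phi_n\|_{L^6}$ via $B(\phi_n)=\frac{4}{3\sqrt2}\beta$ together with the Sobolev embedding in $\mathbf{R}^3$.

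Next I would extract a subsequence (not relabeled) with $\phi_n\rightharpoonup\phi^*$ weakly in $H^2(\Omega)$. By the compact Sobolev embedding $H^2(\Omega)\hookrightarrow\hookrightarrow H^1(\Omega)$ (and $\hookrightarrow\hookrightarrow C(\bar\Omega)$ when useful, for $\Omega\subset\mathbf{R}^3$ with Lipschitz boundary), we get strong convergence $\phi_n\to\phi^*$ in $H^1(\Omega)$ and a.e. in $\Omega$, possibly after a further subsequence. This strong convergence is what lets me pass to the limit in all the nonlinear terms. For the constraints: $V$ is continuous under $L^1$ (hence $H^1$) convergence, so $V(\phi^*)=\alpha$; for $A(\phi)=\frac{3\sqrt2}{4}B(\phi)$, the gradient term $\int|\nabla\phi|^2$ is weakly lower semicontinuous and the potential term $\int(\phi^2-1)^2$ converges by strong $H^1$/a.e. convergence plus the $L^4$ bound, but to keep $A(\phi^*)=\beta$ exactly I would note that $B$ is in fact continuous along this sequence (strong $H^1$ convergence gives convergence of $\int|\nabla\phi_n|^2$), so the constraint set is preserved in the limit; hence $\phi^*\in S$.

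Finally I would establish weak lower semicontinuity of $E_{ADE}$. For $W$: write the integrand as $\frac{\kappa\epsilon}{2}|\Delta\phi_n - f(\phi_n)|^2$ where $f(\phi_n)\to f(\phi^*)$ strongly in $L^2$ (by a.e. convergence plus the uniform $L^6$ bound and dominated convergence, since $f$ has cubic growth), while $\Delta\phi_n\rightharpoonup\Delta\phi^*$ weakly in $L^2$; therefore $\Delta\phi_n - f(\phi_n)\rightharpoonup\Delta\phi^* - f(\phi^*)$ weakly in $L^2$, and the $L^2$ norm is weakly lower semicontinuous, giving $W(\phi^*)\le\liminf W(\phi_n)$. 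For $G$: the integral $\int_\Omega\big((1-\phi^2)\Delta\phi+\frac{1}{\epsilon^2}\phi(1-\phi^2)^2\big)dx$ converges along the sequence — $(1-\phi_n^2)\to(1-\phi^{*2})$ strongly in $L^2$ (again cubic growth controlled by $L^6$) paired with $\Delta\phi_n\rightharpoonup\Delta\phi^*$, and the purely lower-order term converges by dominated convergence — so $\Delta A(\phi_n)\to\Delta A(\phi^*)$ and hence $G(\phi_n)\to G(\phi^*)$ by continuity of $t\mapsto\frac{\bar\kappa}{2}\frac{\pi}{A_0D^2}(t-\Delta A_0)^2$. Combining, $E_{ADE}(\phi^*)\le\liminf E_{ADE}(\phi_n)=\inf_S E_{ADE}$, and since $\phi^*\in S$ this infimum is attained. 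The main obstacle is the first step: extracting the uniform $H^2$ bound from the bound on $W$, because $W$ only controls a combination of $\Delta\phi_n$ and a nonlinear term, so one must carefully bootstrap — use the area/volume constraints to bound $\|\phi_n\|_{L^2}$ and $\|\nabla\phi_n\|_{L^2}$, then $\|\phi_n\|_{L^6}$ via Sobolev, then $\|f(\phi_n)\|_{L^2}$, and only then conclude $\|\Delta\phi_n\|_{L^2}$ is bounded and invoke elliptic $H^2$ regularity; the ADE term $G$, being nonnegative, causes no additional difficulty in the compactness step and only needs a continuity check in the lower-semicontinuity step.
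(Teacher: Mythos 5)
Your proposal is correct and follows essentially the same route as the paper's proof: the direct method, with the area and volume constraints plus Poincar\'{e} giving a uniform $H^1$ bound, the Sobolev embedding $H^1\hookrightarrow L^6$ controlling the cubic nonlinearity so that the bound on $W$ yields an $L^2$ bound on $\Delta\phi_n$ and hence an $H^2$ bound, weak lower semicontinuity of $W$ via strong convergence of the nonlinear part paired with weak convergence of $\Delta\phi_n$, and continuity of $G$ via strong--weak convergence of the product in $\Delta A$. The only cosmetic difference is that you pass to the limit in the nonlinear terms using a.e.\ convergence and dominated convergence where the paper invokes the compact embedding $H^2(\Omega)\hookrightarrow C(\bar\Omega)$ to get uniform convergence; both are valid here.
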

\begin{proof}
The energy functional is always non-negative and thus is bounded from below, and there exists a minimizing sequence $\{\phi_n\in S\}_{n=1}^{\infty}$, such that
\begin{equation*}
\lim_{n\rightarrow\infty}E_{ADE}(\phi)=C^*,
\end{equation*}
where $C^*$ is the infimum of $E_{ADE}$.


From $B(\phi_n)=\beta$, we derive
$$\int_{\Omega}\frac{\epsilon}{2}|\nabla\phi|^2 dx\leq\beta,\,\,\, \int_{\Omega}\frac{1}{4\epsilon}(\phi^2-1)^2 dx \leq\beta,$$
therefore
$$\|\nabla\phi_n\|_{L^2}^2\leq\frac{2\beta}{\epsilon},\,\,\, \|\phi_n^2-1\|_{L^2}^2\leq4\epsilon\beta.$$
Let $\bar{\phi}_n=\frac{\int_{\Omega}\phi_n dx}{|\Omega|}=\frac{2\alpha}{|\Omega|}-1$. Using $V(\phi_n)=\alpha$ and Poincar\'{e}-Wirtinger inequality, we have
$$\|\phi_n-\bar{\phi}_n\|_{L^2}\leq C_p\|\nabla\phi_n\|_{L^2}\leq C_p\sqrt{\frac{2\beta}{\epsilon}},$$
where $C_p$ is the Poincar\'{e} constant. Consequently,
$$\|\phi_n\|_{L^2}\leq\|\phi_n-\bar{\phi}_n\|_{L^2}+\|\bar{\phi_n}\|_{L^2}\leq C_p\sqrt{\frac{2\beta}{\epsilon}}+ \left(\frac{2\alpha}{|\Omega|}-1\right)\sqrt{|\Omega|}.$$
Thus $\phi_n$ is uniformly bounded in $H^1(\Omega)$, that is
$$\|\phi_n\|_{H^1}\leq C_1,\forall n, \text{where $C_1$ depends on $\alpha$, $\beta$, $\epsilon$ and $|\Omega|$.}$$


Define $q(\phi)=\frac{1}{\epsilon^2}(\phi^2-1)(\phi+C\epsilon)$. Since $W(\phi_n)\leq E_{ADE}(\phi_n)\rightarrow C^*$, we have
$$\|\Delta\phi_n-q(\phi_n)\|_{L^2}^2\leq C_2, \forall n.$$
Considering the Sobolev embedding theorem,
$$\|\phi_n\|_{L^6}\leq C_s\|\phi_n\|_{H^1}\leq C_sC_1.$$
we have
$$|q(\phi_n)|\leq\frac{1}{\epsilon^2}|\phi_n^2-1||\phi_n|+\frac{C}{\epsilon}|\phi_n^2-1|.$$
By H\"{o}lder's inequality, we obtain
$$\|(\phi_n^2-1)\phi_n\|_{L^2}\leq\|\phi_n^2-1\|_{L^3}\|\phi_n\|_{L^6}.$$
From $\|\phi_n^2\|_{L^3}=\|\phi_n\|_{L^6}\leq C_sC_1$, we get
$$\|\phi_n^2-1\|_{L^3}\leq (C_sC_1)^2+|\Omega|^{\frac{1}{3}}.$$
So $\|(\phi_n^2-1)\phi_n\|_{L^2}\leq \left((C_sC_1)^2+|\Omega|^{\frac{1}{3}}\right)C_sC_1\triangleq C_3'$.
Then
$$\|q(\phi_n)\|_{L^2}\leq \frac{C_3'}{\epsilon^2}+2C\sqrt{\frac{\beta}{\epsilon}}\triangleq C_3.$$
Therefore, $\Delta\phi_n$ is uniformly bounded in $L^2(\Omega)$, namely
$$\|\Delta\phi_n\|_{L^2}\leq\|\Delta\phi_n-q(\phi_n)\|_{L^2}+\|q(\phi_n)\|_{L^2}\leq C_2+C_3\triangleq C_4. $$
From the $H^2$-regularity theory for elliptic problems, we have $\phi_n$ uniformly bounded in $H^2(\Omega)$, i.e. $\|\phi_n\|_{H^2}\leq C_6, \forall n$.


By uniform boundedness in $H^2(\Omega)$, there exists a subsequence still denoted
${\phi_n}$ and $\phi\in H^2(\Omega)$ such that $\phi_n \rightharpoonup \phi^*$ 
in $H^2(\Omega)$. The Rellich-Kondrachov compact embedding theorem implies $\phi_n \rightarrow \phi^*$ 
in $H^1(\Omega)$. Furthermore, since $H^1(\Omega)\hookrightarrow L^p(\Omega)$ and $H^2(\Omega)\hookrightarrow C^{0,m}(\Omega)(m>0)$, we have $\phi_n\rightarrow \phi^*$ in $L^p(\Omega)(p<\infty)$ and uniformly in $C(\bar{\Omega})$. Strong convergence in $L^1(\Omega)$ gives
$$V(\phi^*)=\int_{\Omega}\phi^*dx =\lim_{n\rightarrow\infty}\int_{\Omega}\phi_n dx=\alpha.$$
Strong convergence in $H^1(\Omega)$ and uniform convergence yield:
$$A(\phi^*)=\lim_{n\rightarrow\infty}A(\phi_n)=\beta.$$
Thus $\phi^*\in S$.


Weak convergence in $H^2(\Omega)$ implies $\Delta\phi_n \rightharpoonup\Delta\phi^*$ in $L^2(\Omega)$. Uniform convergence $\phi_n\rightarrow\phi^*$ in $C(\bar{\Omega})$ gives $q(\phi_n) \rightarrow q(\phi^*)$ in $L^{\infty}$. Thus
$$\Delta\phi_n-q(\phi_n)\rightharpoonup \Delta\phi^*-q(\phi^*)\,\, in\,\, L^2(\Omega).$$
By weak lower semicontinuity of the $L^2$-norm, we have
$$\|\Delta\phi^*-q(\phi^*)\|_{L^2}^2 \leq \liminf_{n\rightarrow \infty}\|\Delta\phi_n-q(\phi_n)\|_{L^2}^2.$$
Therefore
$$W(\phi^*)\leq\liminf_{n\rightarrow \infty}W(\phi_n).$$
Define the functional
$$J(\phi)=-\frac{3D}{4}\int_{\Omega}\left((1-\phi^2)\Delta\phi+\frac{1}{\epsilon^2}\phi(1-\phi^2)^2\right)dx-\Delta A_0,$$
so that $G(\phi)=\frac{\overline{\kappa}}{2}\frac{\pi}{A_0 D^2}J(\phi)^2$. From uniform convergence $\phi_n\rightarrow\phi^*$ in $C(\bar{\Omega})$ and weak convergence $\Delta\phi_n\rightharpoonup\Delta\phi^*$ in $L^2(\Omega)$, we have
$$1-\phi_n^2 \rightarrow 1-(\phi^*)^2,\,\, \phi_n(1-\phi_n^2)^2 \rightarrow \phi^*\left(1-(\phi^*)^2\right)^2\,\, \text{in}\,\, L^{\infty}(\Omega).$$
According to the property of strong-weak convergence of products, it implies $J(\phi_n)\rightarrow J(\phi^*)$, and consequently $G(\phi^*)\rightarrow G(\phi^*)$. Combining these results, we obtain
\begin{align*}
E_{ADE}(\phi^*)&=W(\phi^*)+G(\phi^*)\\
         &\leq\liminf_{n\rightarrow \infty}W(\phi_n)+\lim_{n\rightarrow\infty}G(\phi_n)\\
         &=\liminf_{n\rightarrow \infty}\left(W(\phi_n)+G(\phi_n)\right)\\
         &=\liminf_{n\rightarrow\infty}E(\phi_n)=C^*.
\end{align*}
This shows that $\phi^*\in S$ is a minimizer of $E_{ADE}$ and satisfies the constraints.
\end{proof}

In order to numerically enforce these two constraints, we introduce two corresponding penalty terms in the free energy so that the total energy is
\begin{equation}
E_{M}(\phi)=W(\phi)+G(\phi)+M_1\left(V(\phi)-\alpha\right)^2+M_2\left( A(\phi)-\beta \right)^2,
\end{equation}
where $M_1, M_2$ are the penalty coefficients, $\alpha$ is the fixed initial volume of $\Omega$, $\beta$ is the surface area constraint.

\vspace{3mm}
For simplicity of notation, let us take $M_1 = M_2 = M$, we then have the following
existence theorem:
\begin{theorem}
For any $M>0$, there exists $\phi_M\in H^2(\Omega)$ such that
$$E_M(\phi_M)=\inf_{\phi\in H^2{\Omega}}E_M(\phi).$$
\end{theorem}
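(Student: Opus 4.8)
The plan is to use the direct method of the calculus of variations, following the structure of the proof of Proposition 2.1 but exploiting the two penalty terms in place of the hard constraints to generate the a priori bounds. Since $W$, $G$ and the two penalty terms are all non-negative, $E_M$ is bounded below by $0$, so $C^* := \inf_{\phi\in H^2(\Omega)}E_M(\phi)$ is finite (it is dominated by $E_M$ evaluated at any smooth test function), and we may fix a minimizing sequence $\{\phi_n\}\subset H^2(\Omega)$ with $E_M(\phi_n)\to C^*$; in particular $E_M(\phi_n)\le C^*+1$ for $n$ large.

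First I would derive uniform bounds. Because $M(A(\phi_n)-\beta)^2\le E_M(\phi_n)\le C^*+1$ and $M(V(\phi_n)-\alpha)^2\le C^*+1$, both $A(\phi_n)$ and $V(\phi_n)$ are bounded uniformly in $n$; since $A(\phi)=\frac{3\sqrt2}{4}B(\phi)$ this gives $B(\phi_n)\le C_B$, hence $\|\nabla\phi_n\|_{L^2}^2\le \frac{2C_B}{\epsilon}$ and $\|\phi_n^2-1\|_{L^2}^2\le 4\epsilon C_B$. The bound on $V(\phi_n)$ controls the mean $\bar\phi_n=|\Omega|^{-1}\int_\Omega\phi_n\,dx$, so the Poincar\'e--Wirtinger inequality $\|\phi_n-\bar\phi_n\|_{L^2}\le C_p\|\nabla\phi_n\|_{L^2}$ yields a uniform $L^2$ bound on $\phi_n$, and therefore $\|\phi_n\|_{H^1}\le C_1$. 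From here the argument is verbatim that of Proposition 2.1: with $q(\phi)=\frac{1}{\epsilon^2}(\phi^2-1)(\phi+C\epsilon)$, the bound $W(\phi_n)\le C^*+1$ controls $\|\Delta\phi_n-q(\phi_n)\|_{L^2}$, the embedding $H^1(\Omega)\hookrightarrow L^6(\Omega)$ together with H\"older's inequality controls $\|q(\phi_n)\|_{L^2}$, hence $\|\Delta\phi_n\|_{L^2}$ is uniformly bounded, and elliptic $H^2$-regularity gives $\|\phi_n\|_{H^2}\le C_2$ for all $n$.

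Next I would pass to the limit. By reflexivity of $H^2(\Omega)$ there is a subsequence (not relabelled) with $\phi_n\rightharpoonup\phi_M$ in $H^2(\Omega)$; Rellich--Kondrachov gives $\phi_n\to\phi_M$ strongly in $H^1(\Omega)$, in every $L^p(\Omega)$ with $p<\infty$, and uniformly in $C(\bar\Omega)$. The two penalty terms are then continuous along the sequence: $V(\phi_n)\to V(\phi_M)$ by $L^1$-convergence and $A(\phi_n)=\frac{3\sqrt2}{4}B(\phi_n)\to A(\phi_M)$ by strong $H^1$-convergence, so $M(V(\phi_n)-\alpha)^2\to M(V(\phi_M)-\alpha)^2$ and $M(A(\phi_n)-\beta)^2\to M(A(\phi_M)-\beta)^2$. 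Exactly as in Proposition 2.1, uniform convergence gives $q(\phi_n)\to q(\phi_M)$ in $L^\infty$, so $\Delta\phi_n-q(\phi_n)\rightharpoonup\Delta\phi_M-q(\phi_M)$ in $L^2(\Omega)$ and weak lower semicontinuity of the $L^2$-norm yields $W(\phi_M)\le\liminf_n W(\phi_n)$; and the strong--weak product convergence $(1-\phi_n^2)\Delta\phi_n\rightharpoonup(1-\phi_M^2)\Delta\phi_M$ in $L^2(\Omega)$ together with $\phi_n(1-\phi_n^2)^2\to\phi_M(1-\phi_M^2)^2$ in $L^\infty(\Omega)$ gives $J(\phi_n)\to J(\phi_M)$, hence $G(\phi_n)\to G(\phi_M)$. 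Adding the four limits, $E_M(\phi_M)\le\liminf_n E_M(\phi_n)=C^*$, and since $\phi_M\in H^2(\Omega)$ this forces $E_M(\phi_M)=C^*$, i.e. $\phi_M$ is a minimizer.

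There is no genuinely hard step once Proposition 2.1 is in hand; the one point that deserves care is the very first one — that the penalty terms, though merely controlled (not pinned) along the minimizing sequence, still bound $B(\phi_n)$ and the mean of $\phi_n$ uniformly, which is exactly what makes the $H^1$ and then $H^2$ estimates go through. I would also flag that the resulting constants depend on $\alpha$, $\beta$, $\epsilon$, $|\Omega|$ \emph{and} $M$, and in particular may blow up as $M\to 0$; this is harmless here because $M>0$ is fixed, but it is the reason the estimate is not uniform in $M$.
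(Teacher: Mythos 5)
Your proposal is correct and follows exactly the route the paper intends: the paper omits this proof, stating only that it is ``similar to the one in Proposition 2.1,'' and your argument is precisely that adaptation, with the one genuinely necessary modification correctly identified --- namely that the uniform bounds on $A(\phi_n)$, $B(\phi_n)$ and $\bar\phi_n$ now come from the boundedness of the penalty terms along a minimizing sequence rather than from the hard constraints. Your closing remark that the constants degrade as $M\to 0$ (but not as $M\to\infty$) is accurate and harmless here.
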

The proof is similar to the one in Proposition 2.1, so we omit the details.
To make it more closely tied to the proposed formulation, we now prove Theorem 2.2, which presents a result based on the framework of Du and Wang \cite{miao1994budding}.
The conclusions in \cite{miao1994budding} can be generalized to the current model incorporating area-difference elasticity (ADE).

\begin{theorem}
With S non-empty, the minimizer $\phi^*$ of $E_{ADE}(\phi)$ in S can be approximated by the minimizer $\phi_M$ of $E_M(\phi)$, that is, there exists a sequence $\phi_{M_n}$, which are minimizers of $E_{M_n}$, converging to some minimizer $\phi^*$ of $E_{ADE}(\phi)$ in $H^2(\Omega)$ and satisfying
$$E_{ADE}(\phi^*)=\lim_{M_n\rightarrow\infty}E_{M_n}(\phi_{M_n}).$$
\end{theorem}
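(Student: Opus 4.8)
\medskip
\noindent\emph{Proof plan.}
The strategy is the usual penalty/$\Gamma$-convergence argument, reusing the a priori estimates already established in Proposition 2.1. First I would fix, by Proposition 2.1, a minimizer $\psi$ of $E_{ADE}$ over $S$ and set $C^{*}=E_{ADE}(\psi)=\inf_{S}E_{ADE}$. Since $\psi\in S$ satisfies $V(\psi)=\alpha$ and $A(\psi)=\beta$, both penalty terms vanish at $\psi$, so $E_{M}(\psi)=E_{ADE}(\psi)=C^{*}$ for every $M$. By minimality of $\phi_{M}$ (Theorem 2.2) this gives the uniform bound $E_{M}(\phi_{M})\le C^{*}$, independently of $M>0$. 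Splitting $E_{M}(\phi_{M})$ into its four non‑negative pieces then yields $W(\phi_{M})\le C^{*}$, $G(\phi_{M})\le C^{*}$, and the asymptotic feasibility estimates $|V(\phi_{M})-\alpha|\le\sqrt{C^{*}/M}$ and $|A(\phi_{M})-\beta|\le\sqrt{C^{*}/M}$.

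Next I would derive a uniform $H^{2}(\Omega)$ bound on $\phi_{M}$ exactly as in Proposition 2.1: for $M\ge 1$, $A(\phi_{M})\le C^{*}+\beta$ controls $B(\phi_{M})$, hence $\|\nabla\phi_{M}\|_{L^{2}}$ and $\|\phi_{M}^{2}-1\|_{L^{2}}$; since $\bar\phi_{M}=\tfrac{2V(\phi_{M})}{|\Omega|}-1$ stays bounded (because $V(\phi_{M})\to\alpha$), Poincar\'e--Wirtinger gives $\|\phi_{M}\|_{H^{1}}\le C_{1}$. Then $W(\phi_{M})\le C^{*}$ bounds $\|\Delta\phi_{M}-q(\phi_{M})\|_{L^{2}}$, the embedding $H^{1}\hookrightarrow L^{6}$ bounds $\|q(\phi_{M})\|_{L^{2}}$, and elliptic $H^{2}$-regularity gives $\|\phi_{M}\|_{H^{2}}\le C$ uniformly in $M$.

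I would then pass to the limit along a sequence $M_{n}\to\infty$: extract $\phi_{M_{n}}\rightharpoonup\phi^{*}$ in $H^{2}(\Omega)$, with $\phi_{M_{n}}\to\phi^{*}$ strongly in $H^{1}(\Omega)$, in $L^{p}(\Omega)$ for $p<\infty$, and uniformly in $C(\bar\Omega)$ (Rellich--Kondrachov and the embeddings used in Proposition 2.1). Asymptotic feasibility upgrades to $V(\phi^{*})=\lim V(\phi_{M_{n}})=\alpha$ and $A(\phi^{*})=\lim A(\phi_{M_{n}})=\beta$, so $\phi^{*}\in S$ and hence $E_{ADE}(\phi^{*})\ge C^{*}$. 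For the energy, weak $H^{2}$ convergence gives $\Delta\phi_{M_{n}}\rightharpoonup\Delta\phi^{*}$ in $L^{2}$ and uniform convergence gives $q(\phi_{M_{n}})\to q(\phi^{*})$ in $L^{\infty}$, so by weak lower semicontinuity $W(\phi^{*})\le\liminf_{n}W(\phi_{M_{n}})$, while $G(\phi_{M_{n}})\to G(\phi^{*})$ exactly as in Proposition 2.1. Since the penalties are non‑negative,
\[
E_{ADE}(\phi^{*})=W(\phi^{*})+G(\phi^{*})\le\liminf_{n}\bigl(W(\phi_{M_{n}})+G(\phi_{M_{n}})\bigr)\le\liminf_{n}E_{M_{n}}(\phi_{M_{n}})\le C^{*},
\]
so all these quantities equal $C^{*}$: $\phi^{*}$ is a minimizer of $E_{ADE}$ in $S$ and $E_{M_{n}}(\phi_{M_{n}})\to C^{*}=E_{ADE}(\phi^{*})$, and, as a byproduct, the penalty terms vanish in the limit.

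Finally I would upgrade weak to strong $H^{2}$ convergence. From $W(\phi_{M_{n}})\le E_{M_{n}}(\phi_{M_{n}})-G(\phi_{M_{n}})\to C^{*}-G(\phi^{*})=W(\phi^{*})$ and the lower semicontinuity bound above, $W(\phi_{M_{n}})\to W(\phi^{*})$, i.e. $\|\Delta\phi_{M_{n}}-q(\phi_{M_{n}})\|_{L^{2}}\to\|\Delta\phi^{*}-q(\phi^{*})\|_{L^{2}}$; combined with the weak convergence $\Delta\phi_{M_{n}}-q(\phi_{M_{n}})\rightharpoonup\Delta\phi^{*}-q(\phi^{*})$ in $L^{2}$ and the Hilbert-space fact that weak convergence together with norm convergence implies strong convergence, this gives strong $L^{2}$ convergence of $\Delta\phi_{M_{n}}-q(\phi_{M_{n}})$; since $q(\phi_{M_{n}})\to q(\phi^{*})$ strongly, $\Delta\phi_{M_{n}}\to\Delta\phi^{*}$ in $L^{2}$, and with the elliptic estimate $\|\psi\|_{H^{2}}\lesssim\|\Delta\psi\|_{L^{2}}+\|\psi\|_{L^{2}}$ (valid under the periodic boundary conditions of the Fourier-spectral setting, as in Proposition 2.1) we conclude $\phi_{M_{n}}\to\phi^{*}$ strongly in $H^{2}(\Omega)$. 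I expect the only genuinely delicate points — everything else being a rerun of Proposition 2.1 — to be (i) squeezing out $W(\phi_{M_{n}})\to W(\phi^{*})$ (rather than merely a $\liminf$) from the sandwich between the uniform upper bound $C^{*}$ and lower semicontinuity, and (ii) using this to promote weak $H^{2}$ convergence to strong.
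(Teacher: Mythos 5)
Your proposal is correct and follows essentially the same route as the paper's proof: compare $E_{M}(\phi_{M})$ with $E_{M}(\psi)=E_{ADE}(\psi)$ for a fixed minimizer $\psi\in S$, extract a weakly convergent subsequence using the a priori bounds from Proposition 2.1, pass the constraints and the energies to the limit, and close the sandwich of inequalities. Your write-up is in fact more complete than the paper's, which asserts the strong $H^{2}$ convergence in one sentence ("the inequality becomes an equality, which shows that $\phi_{M_n}$ strongly converges"), whereas you supply the missing Radon--Riesz argument via $W(\phi_{M_n})\to W(\phi^{*})$ and the elliptic estimate.
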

\begin{proof}
$\forall M>0$,
$$E_M(\phi_M)= \min E_M(\phi)\leq E_M(\phi^*)=E_{ADE}(\phi^*).$$
Thus, $V(\phi_M)$, $A(\phi_M)$ and $E_{ADE}(\phi_M)$ are uniformly bounded for large $M$. Similar to the proof of Proposition 2.1, there exists a subsequence of $\phi_{M_n}$, such that
$\phi_{M_n}\rightharpoonup\tilde{\phi}$ in $H^2(\Omega)$ and $\phi_M \rightarrow \tilde{\phi}$ in $H^1(\Omega)$. Then
$$V\left(\tilde\phi\right)=\lim_{n\rightarrow\infty}V(\phi_{M_n})=\alpha,$$
$$A\left(\tilde\phi\right)=\lim_{n\rightarrow\infty}A(\phi_{M_n})=\beta,$$
Thus $\tilde\phi\in S$. By convergence and semi-lower continuity, we have
$$E_{ADE}\left(\tilde\phi\right)\leq\lim_{n\rightarrow\infty}E_{ADE}(\phi_{M_n})\leq\lim_{n\rightarrow\infty}E_{M_n}(\phi_{M_n})\leq E_{ADE}(\phi^*)=\min_{\phi\in S}E_{ADE}(\phi).$$
So $\tilde\phi$ achieves the minimum of $E_{ADE}(\phi)$ with volume and surface area constraints. Moreover, the inequality becomes an equality, which shows that $\phi_{M_n}$ strongly converges to $\tilde\phi$.
\end{proof}

\vspace{3mm}
Therefore, we see that the original problem of minimizing the energy of the ADE model with prescribed surface area and bulk volume constraints can be formulated as finding the function $\phi=\phi(x)$ on the whole domain that minimizes the energy $E(\phi)$.

Let
$$T_1(\phi)=M_1\left(V(\phi)-\alpha\right)^2,\ \  T_2(\phi)=M_2\left(A(\phi)-\beta \right)^2,$$

The Allen-Cahn type dynamic equation takes the following form:
\begin{equation}\label{2.9}
\phi_t=-\frac{\delta E_M}{\delta\phi}=-\left(\frac{\delta W}{\delta\phi}+\frac{\delta G}{\delta\phi}+\frac{\delta T1}{\delta\phi}+\frac{\delta T2}{\delta\phi} \right).
\end{equation}
with the initial value $\phi(x,0)=\phi_0(x)$.
For the boundary conditions, we adopt periodic boundary conditions. While other approaches, such as a Dirichlet condition ($\phi = -1$) as used in \cite{du2004phase}, can effectively confine a vesicle, our choice is motivated by two key factors. Computationally, periodic boundary conditions are a natural requirement for the highly efficient Fourier-spectral method employed in our numerical schemes. Physically, this condition is well-justified for modeling a representative vesicle in a bulk environment, thereby avoiding artificial interactions with solid domain walls, especially when the vesicle occupies a significant portion of the domain.

Let us denote
\begin{equation*}
\left\{
\begin{array}{ll}
&f=\epsilon\Delta \phi-\frac{1}{\epsilon}(\phi^2-1)\phi,\\
&f_c=\epsilon\Delta\phi-\frac{1}{\epsilon}(\phi^2-1)(\phi+C\epsilon),\\
&g=\Delta f_c-\frac{1}{\epsilon^2}(3\phi^2+2C\epsilon \phi-1)f_c.
\end{array}
\right.
\end{equation*}
We provide the required variational derivatives:
\begin{align*}
&\frac{\delta W}{\delta \phi}=\kappa g,\\
&\frac{\delta G}{\delta\phi}=-\frac{3\overline{\kappa}\pi}{4A_0 D}( \Delta A-\Delta A_0)\left(-2\phi\Delta\phi-\Delta(\phi^2)+\frac{1}{\epsilon^2}(1-6\phi^2+5\phi^4)\right):=h,\\
&\frac{\delta T_1}{\delta\phi}=M_1\left(V(\phi)-\alpha\right),\\
&\frac{\delta T_2}{\delta\phi}=\frac{3\sqrt{2}}{2} M_2\left(A(\phi)-\beta\right)(-f).
\end{align*}

\section{Numerical schemes}
Next, we focus on the numerical solution of (\ref{2.9}). For the spatial discretization, we use Fourier spectral methods. Due to the regularization effect of the finite transition layer, for fixed $\epsilon$, the solutions exhibit high-order regularities. This property makes spectral methods, often implemented with FFT routines, very efficient for this problem. There are a number of options for the time discretization. One can use the forward Euler method:
\begin{equation}
\frac{\phi_{n+1}-\phi_n}{\Delta t}=-\left(\kappa g(\phi_n)+\frac{\delta G(\phi_n)}{\delta\phi}+\frac{\delta T1(\phi_n)}{\delta\phi}+\frac{\delta T2(\phi_n)}{\delta\phi}\right).
\end{equation}
The energy decay properties can be ensured, but only for a sufficiently small time step $\Delta t$. To improve the stability and accuracy of the temporal approximations while maintaining comparable efficiency, we can apply a semi-implicit time discretization scheme:
\begin{equation}\label{3.2}
\frac{\phi_{n+1}-\phi_n}{\Delta t}=-\left(\kappa g_{n,n+1}+\frac{\delta G(\phi_n)}{\delta\phi}+\frac{\delta T1(\phi_n)}{\delta\phi}+\frac{\delta T2(\phi_n)}{\delta\phi}\right),
\end{equation}
where
\begin{align*}
g_{n,n+1}=&\epsilon\Delta^2\phi_{n+1}+\frac{2}{\epsilon}\Delta \phi_{n+1}-\frac{1}{\epsilon}\Delta\phi_n^3-C\Delta\phi_n^2-\frac{3}{\epsilon}\phi_n^2\Delta\phi_n-2C\phi_n\Delta\phi_n\\
&+\frac{1}{\epsilon^3}(3\phi_n^2+2C\epsilon\phi_n-1)(\phi_n^2-1)(\phi_n+C\epsilon).
\end{align*}
In this scheme, the higher-order derivative terms in $\phi$ are treated implicitly, while the remaining nonlinear parts are treated explicitly.

Theoretically, we can also adopt a fully implicit scheme
\begin{align}\label{3.3}
\frac{\phi_{n+1}-\phi_n}{\Delta t}=&-\left\{   \kappa g(\phi_n,\phi_{n+1}) + h(\phi_n,\phi_{n+1})+\frac{M_1}{2} \left(V(\phi_{n+1})+V(\phi_n)-2\alpha \right) \right.  \\
& \left. + \frac{3\sqrt{2}M_2}{4} \left(A(\phi_{n+1})+A(\phi_n)-2\beta \right)\left(-f(\phi_n,\phi_{n+1})\right)   \right\}\notag
\end{align}
and ensure the monotonic decreasing of the energy while preserving the constraints. First, we define the function $f$, $g$ and $h$ as
\begin{align*}
&f(\phi,\eta)=\frac{\epsilon}{2}\Delta (\phi+\eta)-\frac{1}{4\epsilon}(\phi^2+\eta^2-2)(\phi+\eta),\\
&g(\phi,\eta)=\frac{1}{2}\Delta \left(f_c(\phi)+f_c(\eta)\right) - \frac{1}{2\epsilon^2}\left(\phi^2+\phi\eta+\eta^2 + C\epsilon(\phi+\eta)-1\right)  \left(  f_c(\phi)+f_c(\eta)  \right),\\
&h(\phi,\eta)=-\frac{3\overline{\kappa}\pi}{8A_0 D}\left( \Delta A(\phi_n)+\Delta A(\phi_{n+1})-2\Delta A_0  \right)\\
&\, \, \, \, \times
\left(-\phi\Delta\phi-\eta\Delta\eta-\frac{1}{2}\Delta(\phi^2+\eta^2)+\frac{1}{\epsilon^2}(1-2\phi^2-2\eta^2-2\phi\eta+2\phi^4+2\eta^4+\phi^2\eta^2)\right).
\end{align*}
These reformulated functions exhibit symmetry in both arguments.
This generalization of nonlinear terms in frameworks provides convenience for our analysis.

\vspace{3mm}
\begin{lemma}
The solution of (\ref{3.3}) satisfies
\begin{align*}
E_M(\phi_{n+1})-E_M(\phi_n)+\frac{1}{\Delta t}\int_{\Omega}(\phi_{n+1}-\phi_n)^2dx=0.
\end{align*}
\end{lemma}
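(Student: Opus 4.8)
The plan is to prove the identity as a \emph{discrete energy law}: test the fully implicit scheme (\ref{3.3}) against the time increment $\phi_{n+1}-\phi_n$ and integrate over $\Omega$. The left-hand side contributes immediately
\[
\int_{\Omega}\frac{\phi_{n+1}-\phi_n}{\Delta t}\,(\phi_{n+1}-\phi_n)\,dx=\frac{1}{\Delta t}\int_{\Omega}(\phi_{n+1}-\phi_n)^2\,dx,
\]
so the whole task is to show that the four terms on the right-hand side of (\ref{3.3}), once tested and integrated, sum to exactly $-\big(E_M(\phi_{n+1})-E_M(\phi_n)\big)$; rearranging then yields the claim. Throughout, periodicity is used to discard boundary terms, so that $\int_{\Omega}\Delta u\,v\,dx=\int_{\Omega}u\,\Delta v\,dx$ and $\int_{\Omega}\nabla u\cdot\nabla v\,dx=-\int_{\Omega}\Delta u\,v\,dx$ are available without restriction.

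The core of the argument is four ``discrete chain rule'' identities, one per energy component, each saying that the symmetric function multiplying $\phi_{n+1}-\phi_n$ in (\ref{3.3}) is a secant (discrete gradient) of the corresponding energy. For the bending part this is transparent: since $W(\phi)=\tfrac{\kappa}{2\epsilon}\int_{\Omega}f_c(\phi)^2\,dx$ is a perfect square in the auxiliary field $f_c(\phi)$, one writes $f_c(\phi_{n+1})^2-f_c(\phi_n)^2=\big(f_c(\phi_{n+1})+f_c(\phi_n)\big)\big(f_c(\phi_{n+1})-f_c(\phi_n)\big)$, and the factorization
\[
f_c(\phi_{n+1})-f_c(\phi_n)=\epsilon\Big(\Delta(\phi_{n+1}-\phi_n)-\tfrac{1}{\epsilon^2}\big(\phi_n^2+\phi_n\phi_{n+1}+\phi_{n+1}^2+C\epsilon(\phi_n+\phi_{n+1})-1\big)(\phi_{n+1}-\phi_n)\Big)
\]
is precisely what is built into $g(\phi_n,\phi_{n+1})$; one integration by parts then gives $\kappa\int_{\Omega}g(\phi_n,\phi_{n+1})(\phi_{n+1}-\phi_n)\,dx=W(\phi_{n+1})-W(\phi_n)$. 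The same perfect-square mechanism handles $T_2$: because $A(\phi)=\tfrac{3\sqrt2}{4}B(\phi)$ with $B$ quadratic in $\nabla\phi$ and in $\phi^2-1$, one checks $\int_{\Omega}\big(-f(\phi_n,\phi_{n+1})\big)(\phi_{n+1}-\phi_n)\,dx=B(\phi_{n+1})-B(\phi_n)$, and the $T_2$ term collapses to $M_2\big((A(\phi_{n+1})-\beta)^2-(A(\phi_n)-\beta)^2\big)$. For $T_1$, since $V$ is affine the scalar $V(\phi_{n+1})+V(\phi_n)-2\alpha$ pulls out of the integral and $\int_{\Omega}(\phi_{n+1}-\phi_n)\,dx=2\big(V(\phi_{n+1})-V(\phi_n)\big)$, so the $T_1$ term reduces at once to $M_1\big((V(\phi_{n+1})-\alpha)^2-(V(\phi_n)-\alpha)^2\big)$.

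The main obstacle is the ADE term $G(\phi)=\tfrac{\overline{\kappa}\pi}{2A_0D^2}\big(\Delta A(\phi)-\Delta A_0\big)^2$. It is again quadratic, but now in the \emph{scalar} $\Delta A(\phi)-\Delta A_0$, so that $G(\phi_{n+1})-G(\phi_n)=\tfrac{\overline{\kappa}\pi}{2A_0D^2}\big(\Delta A(\phi_{n+1})+\Delta A(\phi_n)-2\Delta A_0\big)\big(\Delta A(\phi_{n+1})-\Delta A(\phi_n)\big)$, and the whole question collapses to the single secant identity $\int_{\Omega}h_0(\phi_n,\phi_{n+1})(\phi_{n+1}-\phi_n)\,dx=\Delta A(\phi_{n+1})-\Delta A(\phi_n)$, where $h_0$ denotes the second factor of $h$. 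Here the perfect-square trick is unavailable: recalling $\Delta A(\phi)=-\tfrac{3D}{4}\int_{\Omega}\big((1-\phi^2)\Delta\phi+\tfrac{1}{\epsilon^2}\phi(1-\phi^2)^2\big)\,dx$, the functional $\Delta A$ is a genuine cubic-plus-quintic functional of $\phi$, so the discretization inside $h$ must reproduce the exact divided differences $\tfrac{p(\phi_{n+1})-p(\phi_n)}{\phi_{n+1}-\phi_n}$ of $p(\phi)=\phi(1-\phi^2)^2$ together with the correct increment of $\int_{\Omega}(1-\phi^2)\Delta\phi\,dx$, rather than a naive average. Establishing this is the heaviest part of the bookkeeping: one expands $(1-\phi_{n+1}^2)\Delta\phi_{n+1}-(1-\phi_n^2)\Delta\phi_n$, integrates by parts repeatedly (using periodicity) to move every Laplacian onto $\phi_{n+1}-\phi_n$, and matches the resulting symmetric polynomial and $\Delta(\phi^2)$-type terms against those defining $h$. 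Once this identity is secured, adding the four increments and testing (\ref{3.3}) yields $\tfrac{1}{\Delta t}\int_{\Omega}(\phi_{n+1}-\phi_n)^2\,dx=-\big(E_M(\phi_{n+1})-E_M(\phi_n)\big)$, which is the assertion.
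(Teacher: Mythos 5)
Your proposal follows essentially the same route as the paper's proof: test the fully implicit scheme against the increment $\phi_{n+1}-\phi_n$ and verify that each symmetric nonlinearity in the scheme is a discrete gradient (secant) of the corresponding energy component, which is exactly the content of the paper's identities (3.4)--(3.7); your perfect-square treatment of $W$ and $T_2$ and the affine argument for $T_1$ are correct and in fact more explicit than the paper's. The one step you defer --- checking that the second factor of $h$ reproduces the exact divided difference of $p(\phi)=\phi(1-\phi^2)^2$ together with the correct increment of $\int_{\Omega}(1-\phi^2)\Delta\phi\,dx$ --- is likewise asserted without verification in the paper, and your divided-difference criterion is precisely the right test to apply to it.
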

\begin{proof}
For (\ref{3.3}), we have
\begin{equation}\label{3.4}
W(\phi_{n+1})-W(\phi_n)=\int_{\Omega}(\phi_{n+1}-\phi_n)\kappa g(\phi_{n+1},\phi_n)dx,
\end{equation}
\begin{equation}\label{3.5}
G(\phi_{n+1})-G(\phi_n)=\int_{\Omega}(\phi_{n+1}-\phi_n)h(\phi_{n+1},\phi_n)dx,
\end{equation}
\begin{align}\label{3.6}
&M_1\left(V(\phi_{n+1}\right)-\alpha)^2-M_1\left(V(\phi_{n})-\alpha\right)^2\\
=&M_1\left(V(\phi_{n+1})+V(\phi_n)-2\alpha\right)\left(V(\phi_{n+1})-V(\phi_n)\right)\notag\\
=&M_1\left(V(\phi_{n+1})+V(\phi_n)-2\alpha\right)\int_{\Omega}(\phi_{n+1}-\phi_n)\frac{1}{2}dx,\notag
\end{align}
\begin{align}\label{3.7}
&M_2\left(A(\phi_{n+1})-\beta\right)^2-M_2\left(A(\phi_{n})-\beta\right)^2\\
=&M_2\left(A(\phi_{n+1})+A(\phi_n)-2\beta\right)\left(A(\phi_{n+1})-A(\phi_n)\right)\notag\\
=&M_2\left(A(\phi_{n+1})+A(\phi_n)-2\beta\right)\int_{\Omega}(\phi_{n+1}-\phi_n)\left(-\frac{3\sqrt{2}}{4}f(\phi_{n+1},\phi_n)\right)dx.\notag
\end{align}

Putting together (\ref{3.4})-(\ref{3.7}), we have the discrete energy law for the numerical solution corresponding to (\ref{3.3}).
\end{proof}

We may also adopt a full backward Euler scheme
\begin{equation}\label{3.8}
\frac{\phi_{n+1}-\phi_n}{\Delta t}=-\left(\kappa g(\phi_{n+1})+\frac{\delta G(\phi_{n+1})}{\delta\phi}+\frac{\delta T1(\phi_{n+1})}{\delta\phi}+\frac{\delta T2(\phi_{n+1})}{\delta\phi}\right).
\end{equation}
As a result, the discrete energy law no longer holds in its strict form, and instead, we obtain the following.
\begin{proposition}
For all $\Delta t>0$ and a given $\phi^n\in H^2(\Omega)$, there exists a solution $\phi^{n+1}$ satisfying the backward Euler scheme (\ref{3.8}). Moreover, $\phi^{n+1}$ may be given by the minimizer in $H^2(\Omega)$ of the modified energy functional:
\begin{equation}\label{3.9}
E_M(\phi)+\frac{1}{2\Delta t}\int_{\Omega}(\phi-\phi^n)^2dx.
\end{equation}
Furthermore,
\begin{equation*}
E_M(\phi^{n+1})-E_M(\phi^n)+\frac{1}{2\Delta t}\int_{\Omega}(\phi^{n+1}-\phi^n)^2dx\leq0.
\end{equation*}
\end{proposition}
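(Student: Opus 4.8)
The plan is to establish all three assertions simultaneously by the direct method of the calculus of variations applied to the regularized functional
\begin{equation*}
\widetilde{E}(\phi):=E_M(\phi)+\frac{1}{2\Delta t}\int_{\Omega}(\phi-\phi^n)^2\,dx,\qquad \phi\in H^2(\Omega),
\end{equation*}
with $M_1,M_2>0$ and $\Delta t>0$ fixed. First I would show $\widetilde{E}$ is non-negative (every summand is) and coercive on $H^2(\Omega)$. Coercivity follows the same chain of estimates as in Proposition 2.1, except that the role of the hard constraints $V(\phi)=\alpha$, $A(\phi)=\beta$ is now played by the penalty and regularization terms: the new quadratic term gives $\|\phi\|_{L^2}\le\|\phi^n\|_{L^2}+(2\Delta t\,\widetilde{E}(\phi))^{1/2}$ directly; boundedness of $M_2(A(\phi)-\beta)^2$ bounds $B(\phi)$, hence $\|\nabla\phi\|_{L^2}$ and $\|\phi^2-1\|_{L^2}$, giving an $H^1$ bound; then, as in Proposition 2.1, boundedness of $W(\phi)$ together with $H^1\hookrightarrow L^6$ bounds $\|q(\phi)\|_{L^2}$ with $q(\phi)=\tfrac{1}{\epsilon^2}(\phi^2-1)(\phi+C\epsilon)$, hence $\|\Delta\phi\|_{L^2}$, and elliptic $H^2$-regularity upgrades this to a bound on $\|\phi\|_{H^2}$ in terms of $\widetilde{E}(\phi)$, $\epsilon$, $C$, $|\Omega|$ and $\Delta t$.

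Next, take a minimizing sequence $\{\phi_k\}\subset H^2(\Omega)$ for $\widetilde{E}$; by coercivity it is bounded in $H^2(\Omega)$, so along a subsequence $\phi_k\rightharpoonup\phi^{n+1}$ in $H^2(\Omega)$, and by Rellich--Kondrachov $\phi_k\to\phi^{n+1}$ in $H^1(\Omega)$, in $L^p(\Omega)$ for $p<\infty$, and uniformly in $C(\bar\Omega)$. Exactly as in the proof of Proposition 2.1, $W$ is weakly lower semicontinuous along this sequence (write $\Delta\phi_k-q(\phi_k)\rightharpoonup\Delta\phi^{n+1}-q(\phi^{n+1})$ in $L^2(\Omega)$, using strong $L^\infty$-convergence of $q(\phi_k)$ together with weak $L^2$-convergence of $\Delta\phi_k$, and invoke lower semicontinuity of the $L^2$-norm), the ADE term $G$ is continuous (since $J(\phi_k)\to J(\phi^{n+1})$ by the same strong--weak product argument used for (2.7) in Proposition 2.1), the penalty terms $M_i(V(\phi)-\alpha)^2$ and $M_2(A(\phi)-\beta)^2$ are continuous under strong $H^1$/uniform convergence, and the added term $\tfrac{1}{2\Delta t}\|\phi-\phi^n\|_{L^2}^2$ is convex and continuous, hence weakly l.s.c. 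Therefore $\widetilde{E}(\phi^{n+1})\le\liminf_k\widetilde{E}(\phi_k)=\inf_{H^2(\Omega)}\widetilde{E}$, so $\phi^{n+1}$ is a minimizer, which proves the ``moreover'' clause.

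Then I would identify the minimizer with a solution of (\ref{3.8}). The functional $\widetilde{E}$ is Gateaux differentiable on $H^2(\Omega)$, with first variation assembled from the variational derivatives already recorded above, $\delta W/\delta\phi=\kappa g$, $\delta G/\delta\phi=h$, $\delta T_1/\delta\phi$, $\delta T_2/\delta\phi$, plus the contribution $\tfrac{1}{\Delta t}(\phi-\phi^n)$ from the regularizer, all integrations by parts being boundary-term free under the periodic boundary conditions. Since $\phi^{n+1}$ is a minimizer, $\widetilde{E}'(\phi^{n+1})=0$ in $H^{-2}(\Omega)$, which is precisely the backward Euler relation (\ref{3.8}), understood in the weak (distributional) sense, as the $\Delta^2$-part of $g$ only lies in $H^{-2}(\Omega)$ for $\phi^{n+1}\in H^2(\Omega)$. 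Finally, comparing the minimizer against the admissible competitor $\phi^n$ and using $\widetilde{E}(\phi^n)=E_M(\phi^n)$ gives
\begin{equation*}
E_M(\phi^{n+1})+\frac{1}{2\Delta t}\int_{\Omega}(\phi^{n+1}-\phi^n)^2\,dx=\widetilde{E}(\phi^{n+1})\le\widetilde{E}(\phi^n)=E_M(\phi^n),
\end{equation*}
which is the asserted energy inequality.

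The main obstacle is the non-convex, fourth-order functional $W$: neither its coercivity nor its weak lower semicontinuity can be read off directly, because $\Delta\phi-q(\phi)$ is a nonlinear expression in $\phi$. The resolution -- already exploited in Proposition 2.1 -- is the two-step extraction: first pass to a subsequence with strong $H^1$ (hence $L^\infty$) convergence so that $q(\phi_k)\to q(\phi^{n+1})$ strongly, and only then combine with $\Delta\phi_k\rightharpoonup\Delta\phi^{n+1}$ weakly in $L^2(\Omega)$. A secondary point requiring care is the ADE functional $J(\phi)=-\tfrac{3D}{4}\int_{\Omega}\big((1-\phi^2)\Delta\phi+\tfrac{1}{\epsilon^2}\phi(1-\phi^2)^2\big)\,dx$: it is continuous and differentiable on $H^2(\Omega)$ because $(1-\phi^2)\in C(\bar\Omega)$ multiplies $\Delta\phi\in L^2(\Omega)$, but its continuity under weak $H^2$ convergence again rests on exactly the same strong--weak pairing.
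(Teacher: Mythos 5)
Your proof is correct and follows exactly the argument the paper intends: the paper itself only cites Du and Wang for this proposition, and the standard proof there is precisely your direct-method minimization of $E_M(\phi)+\frac{1}{2\Delta t}\|\phi-\phi^n\|_{L^2}^2$, with coercivity and lower semicontinuity inherited from the existence proof of Proposition 2.1, the Euler--Lagrange equation identified with the backward Euler scheme, and the energy inequality obtained by comparing the minimizer against the competitor $\phi^n$. Your substitution of the regularizing quadratic term and the penalty $M_2(A(\phi)-\beta)^2$ for the hard constraints in the coercivity chain is the right adaptation, so nothing further is needed.
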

\noindent The proof is adapted from the similar result in Du and Wang \cite{du2007convergence}.

\section{Numerical experiments}
In this section, we present a series of numerical experiments to demonstrate the capabilities of the proposed model. All simulations are performed in a cubic domain $\Omega$ with periodic boundary conditions.
The spatial variables are discretized using a Fourier-spectral method with a cubic grid of grid size $h$. The temporal evolution is solved using the semi-implicit scheme described in (\ref{3.2}).
This scheme improves stability and allows for larger time steps, making it more efficient than fully explicit methods, especially for long-time simulations.
Unless otherwise specified, the model parameters are set as follows: $\Omega=[0,1]^3$, $h= \frac{1}{64}$, $\kappa=1$, $\bar{\kappa}=1.4$, $M_1=10^5$ and $M_2=10^4$.
For the numerical experiments, we only considered $C=0$ to showcase the influence of the ADE term only.
The physical distance between leaflets is defined as $D=\frac{2}{3}\epsilon$, where $\epsilon$ is the interface width. Each simulation is run until the system reaches a steady state.

\subsection{Formation of Classic Vesicle Shapes}
(1)Let $\epsilon=0.04$, $\Delta t=5\times10^{-7}$, $\alpha=0.0289$, $\beta=0.4880$, $\Delta A_0=0.1090$. The initial condition of the phase-field variable is given as
\begin{equation}
u_0=\tanh\left( \frac{0.35-(\frac{(x-0.5)^2}{0.5} + \frac{(y-0.5)^2}{0.5} + \frac{(z-0.5)^2}{0.1} )}{\sqrt{2}\epsilon} \right).
\end{equation}
The final steady-state configuration is a discocyte shape illustrated in Figure 1. Figure 1(a) shows the 3D view of the shape, while Figure 1(b) presents its corresponding cross-section. The cross-section clearly shows that the phase-field variable transitions smoothly between $-1$ and $1$, which validates the diffuse interface nature of our model. This behavior of the phase field is similar for all examples considered in this paper.
\begin{figure}[htbp]
\centering
\subfigure[3D view]{
\includegraphics[scale=0.4]{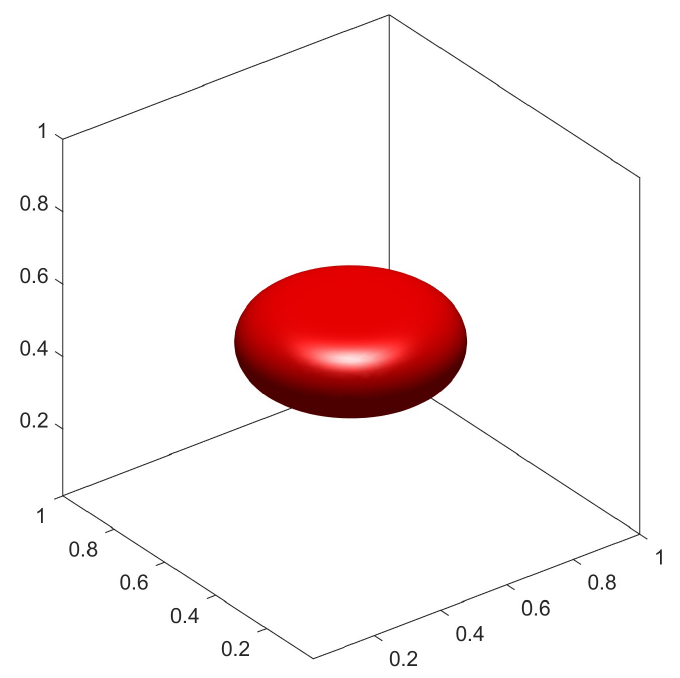}}%
\subfigure[corresponding cross section view]{
\includegraphics[scale=0.4]{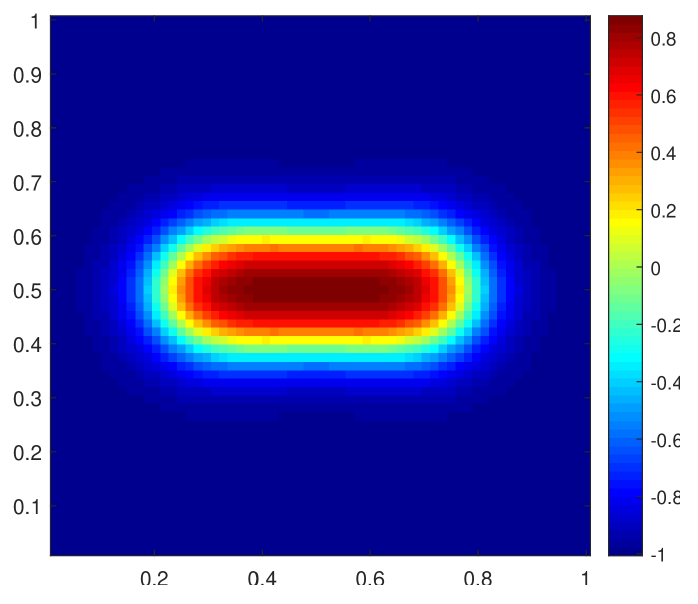}}
\caption{discocyte shape}
\end{figure}
\begin{remark}
The relaxed area difference, $\Delta A_0$, is assigned by first calculating the area difference $\Delta A$ of the initial configuration using equation (\ref{2.5}), and then systematically varying this calculated value for subsequent simulations.
\end{remark}

(2)Let $\epsilon=0.03$, $\Delta t=2\times10^{-7}$, $\alpha=0.0652$, $\beta=0.9092$, $\Delta A_0=0.2839$. The initial condition is
\begin{equation}\label{4.2}
u_0 = \tanh\left( \frac{0.6-(\frac{(x-0.5)^2}{0.35^2} + \frac{(y-0.5)^2}{0.35^2} + \frac{(z-0.5)^2}{0.35^2} )}{\sqrt{2}\epsilon} \right).
\end{equation}
The final steady-state configuration is a torus. Its 3D view and corresponding cross-section are illustrated in Figure 2(a) and 2(b), respectively.
\begin{figure}[htbp]
\centering
\subfigure[3D view]{
\includegraphics[scale=0.4]{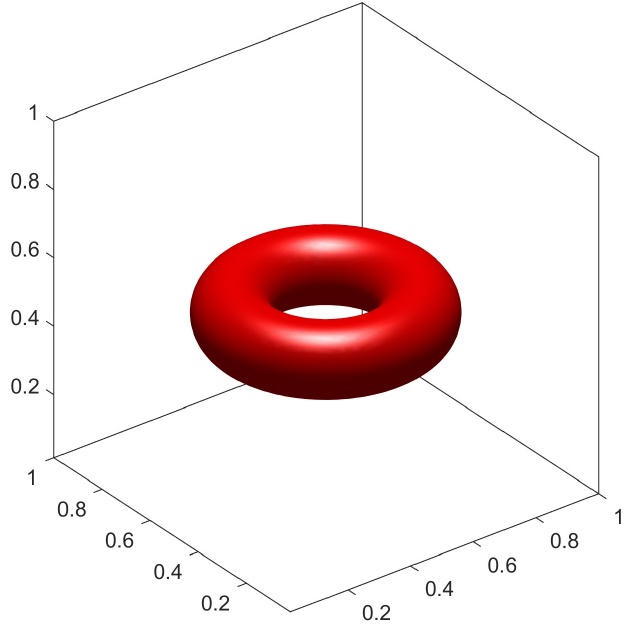}}%
\subfigure[corresponding cross section view]{
\includegraphics[scale=0.4]{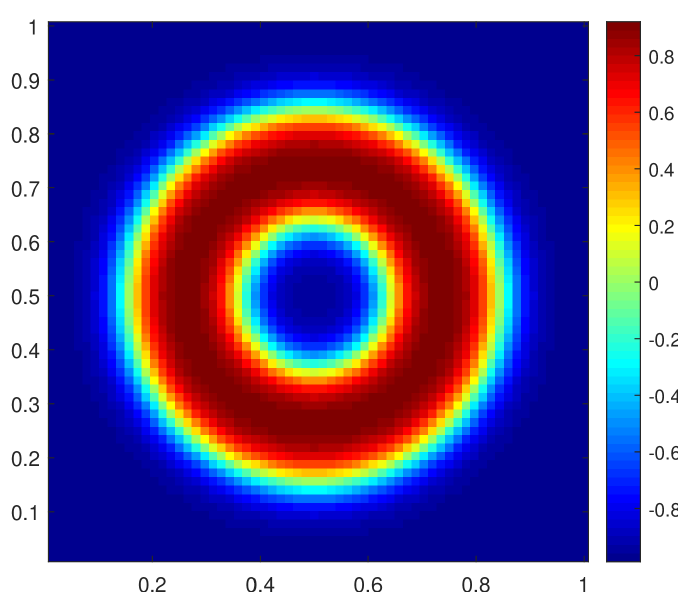}}
\caption{torus}
\end{figure}

Next, we investigate the continuous morphological transitions driven by systematically varying the surface area ($\beta$) and the relaxed area difference ($\Delta A_0$). In this series of experiments, the initial state is a flattened ellipsoid (\ref{4.3}) and the core parameters remain constant ($\epsilon=0.05$, $\alpha=0.0077$, $\Delta t=5\times10^{-7}$).
\begin{equation}\label{4.3}
u_0 = \tanh\left( \frac{0.5-(\frac{(x-0.5)^2}{0.2^2} + \frac{(y-0.5)^2}{0.2^2} + \frac{(z-0.5)^2}{0.35^2} )}{\sqrt{2}\epsilon} \right).
\end{equation}

(3)As we incrementally increase $\beta$ and $\Delta A_0$, the vesicle evolves from a simple biconcave shape (Figure 3(a)) into increasingly elongated gourd-like structures (Figures 3(b) and 3(c)). Further increasing $\Delta A_0$ while keeping $\beta$ constant causes the ``neck'' of the gourd to elongate until it forms a stable cylinder (Figure 3(e)). A final increase in $\Delta A_0$ results in a wider, more pronounced gourd shape (Figure 3(d)).The specific parameters for each shape are detailed in Table 1.
A similar transition was also observed experimentally in \cite{hollo2021shape}, where the authors tracked the reduction in volume, equivalent to the increase in surface area. Hence the results showcase the biological relevance of the formulation.
\begin{center}
{Table 1:\quad Parameters for Gourd and Cylinder Shape Transitions}\vskip 0.1cm
\begin{tabular}{c c c c}
\hline
$Figure$   &$\beta$  &$\Delta A_0$    &Resulting Shape   \\
\hline
$3(a)$     &0.1992   &0.1614       &Biconcave       \\
$3(b)$     &0.2068   &0.1676   &Early Gourd     \\
$3(c)$    &0.2390    &0.1906     &Elongated Gourd    \\
$3(d)$    &0.2390    &0.2253    &Gourd         \\
$3(e)$    &0.2390    &0.2426    &Cylinder      \\
\hline
\end{tabular}
\end{center}

\begin{figure}[htbp]
\centering
\subfigure[]
{
    \begin{minipage}[b]{.4\linewidth}
        \centering
        \includegraphics[scale=0.4]{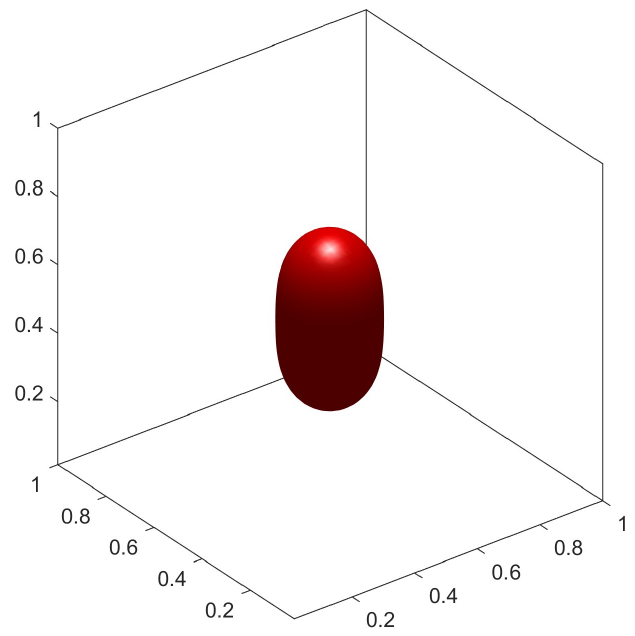}
    \end{minipage}
}
\subfigure[]
{ 	\begin{minipage}[b]{.4\linewidth}
        \centering
        \includegraphics[scale=0.4]{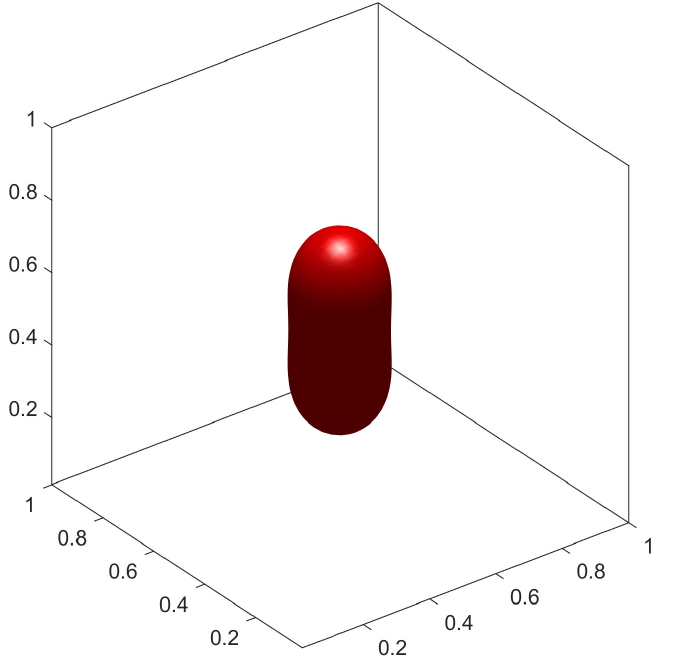}
    \end{minipage}
}
\subfigure[]
{
    \begin{minipage}[b]{.4\linewidth}
        \centering
        \includegraphics[scale=0.4]{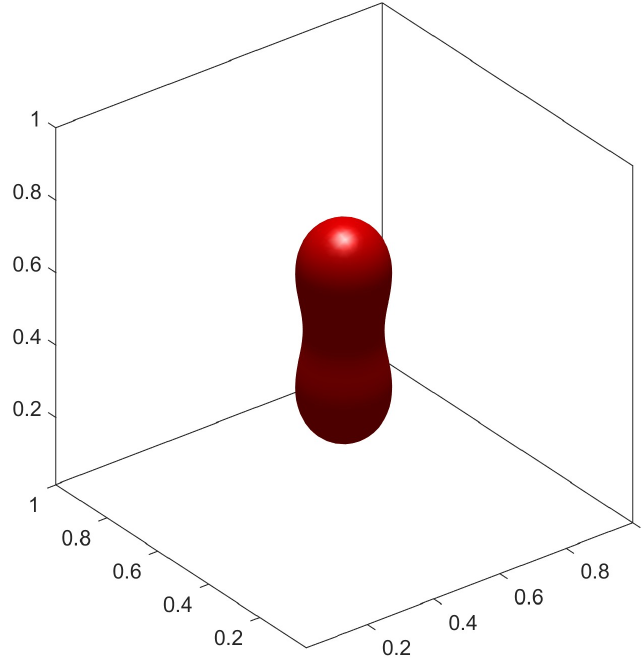}
    \end{minipage}
}
\subfigure[]
{
    \begin{minipage}[b]{.4\linewidth}
        \centering
        \includegraphics[scale=0.4]{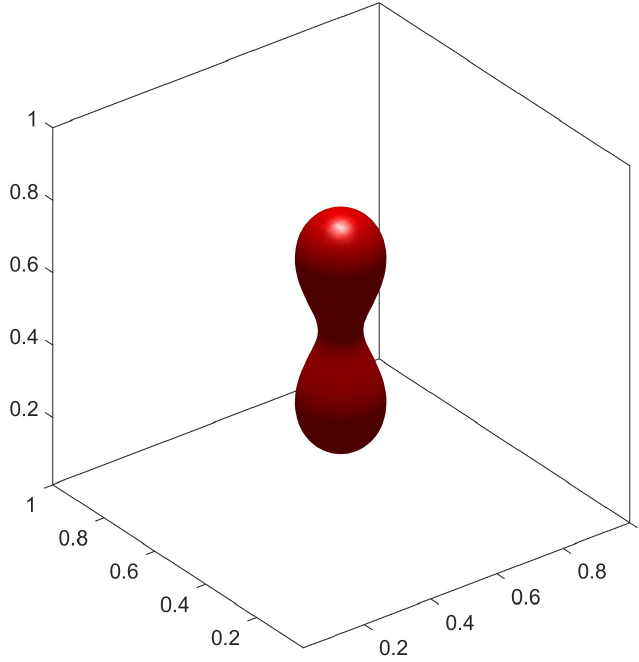}
    \end{minipage}
}
\subfigure[]
{
    \begin{minipage}[b]{.4\linewidth}
        \centering
        \includegraphics[scale=0.4]{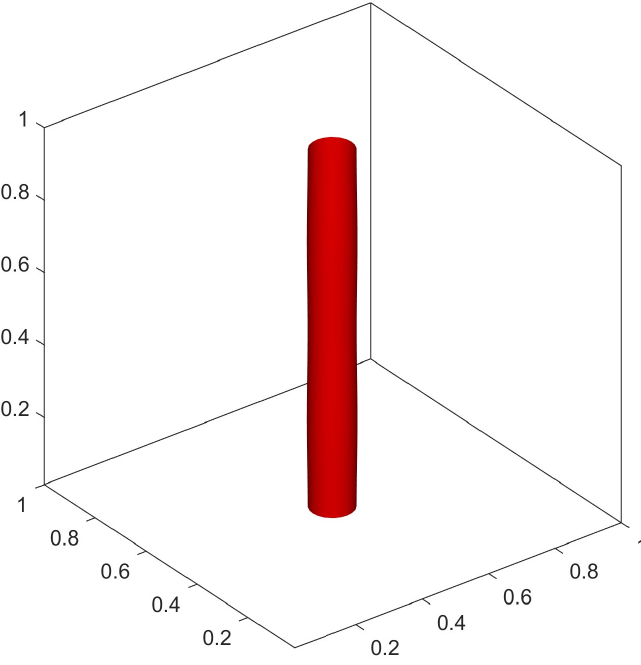}
    \end{minipage}
}
\caption{gourd shape}
\end{figure}

\subsection{Emergence of Complex, High-Genus Morphologies}
To showcase the model's versatility, we explored parameter regimes that give rise to more complex topologies. We found that these intricate, high-genus shapes are more readily formed with a smaller interface thickness ($\epsilon=0.02$). A smaller $\epsilon$ provides a better approximation of the sharp-interface bending energy, particularly in regions of high curvature that are characteristic of such complex structures. By further adjusting the volume and area constraints, we can stabilize a rich spectrum of configurations.
These include two connected spheres (Figure 4), resembling an early stage of cell division, and an elongated chain-like structure (Figure 5), which mimics vesicle fission.
Furthermore, by adjusting the initial geometry and constraints, we can generate multi-armed, starfish-like configurations. Figures 6, 7, and 8 show stable steady states with three, four, and six arms, respectively. The formation of these complex, high-genus structures highlights the model's capability to capture sophisticated membrane behaviors beyond simple deformations. This makes it a powerful tool for exploring phenomena such as the formation of tubular networks in cellular organelles\cite{voeltz2002structural,shibata2006rough}.
 The specific parameters are detailed in experiments (4) to (9).

(4)Let $\epsilon=0.02$, $\Delta t=2\times10^{-7}$, $\alpha=0.0074$, $\beta=0.1969$, $\Delta A_0=0.0711$. Other parameters follow the same configuration as (3).
The final steady-state shape is two spheres connected in Figure 4.
\begin{figure}[htbp]
\centering
\includegraphics[scale=0.5]{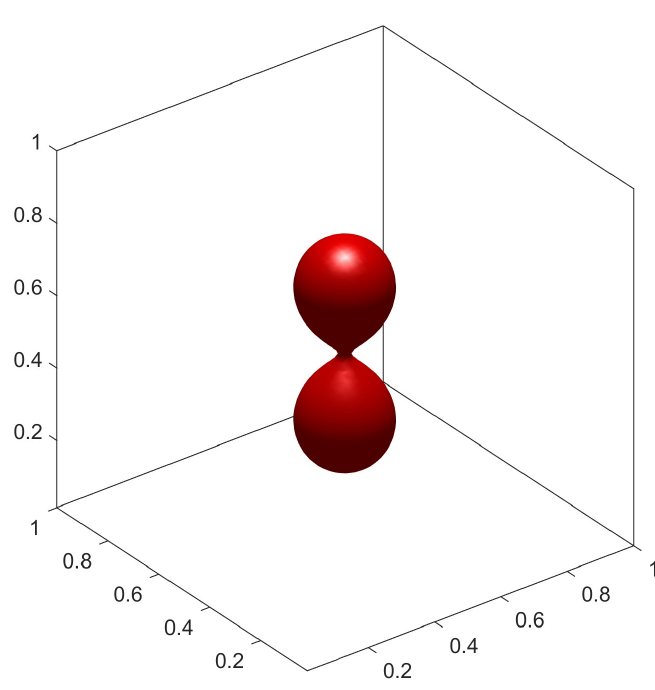}
\caption{two-sphere connected shape}
\end{figure}

(5)Let $\epsilon=0.02$, $\Delta t=5\times10^{-7}$, $\alpha=0.0074$, $\beta=0.2328$, $\Delta A_0=0.0958$. The initial condition is
\begin{equation}
u_0= \tanh\left( \frac{0.5-(\frac{(x-0.5)^2}{0.2^2} + \frac{(y-0.5)^2}{0.2^2} + \frac{(z-0.5)^2}{0.35^2} )}{\sqrt{2}\epsilon} \right).
\end{equation}
The final steady-state configuration is a chain-shaped illustrated in Figure 5.
\begin{figure}[htbp]
\centering
\includegraphics[scale=0.4]{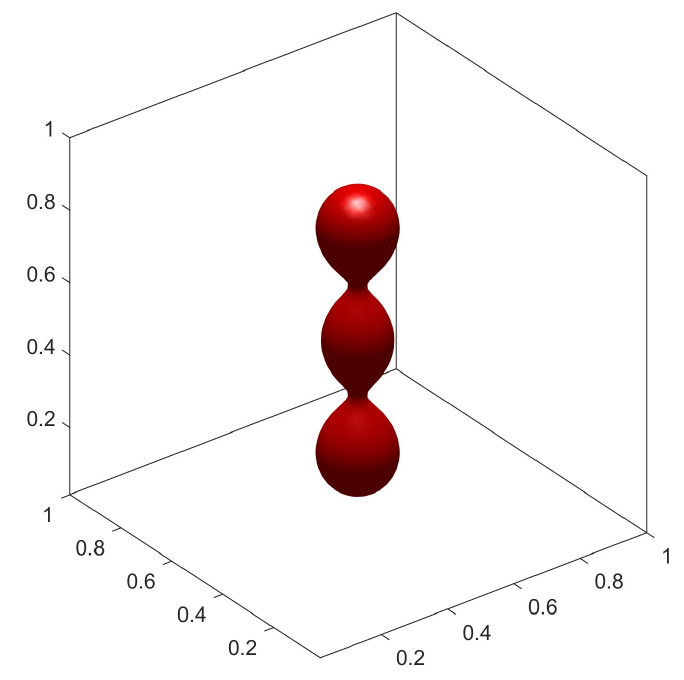}
\caption{chain-shaped}
\end{figure}

(6)Let $\epsilon=0.02$, $\Delta t=1\times10^{-7}$, $\alpha=0.0226$, $\beta=0.4489$, $\Delta A_0=0.1520$. The initial condition is
\begin{equation}
u_0= \tanh\left( \frac{0.5-(\frac{(x-0.5)^2}{0.35^2} + \frac{(y-0.5)^2}{0.35^2} + \frac{(z-0.5)^2}{0.35^2} )}{\sqrt{2}\epsilon} \right).
\end{equation}
The final steady-state configuration is a three-armed shape. The result is illustrated in Figure 6.
\begin{figure}[htbp]
\centering
\includegraphics[scale=0.4]{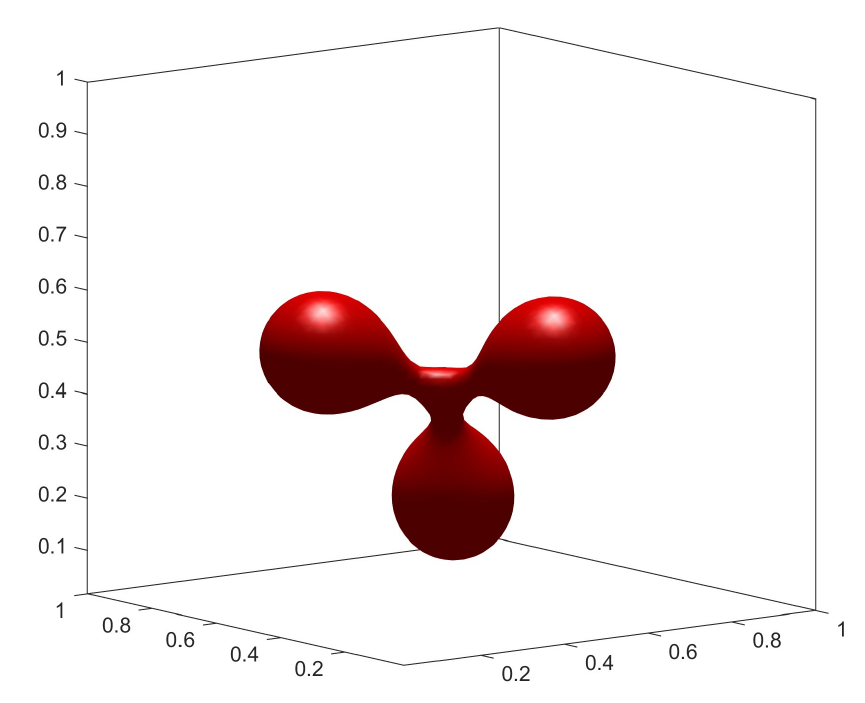}
\caption{three-armed shape}
\end{figure}

(7)Let $\epsilon=0.02$, $\Delta t=2\times10^{-7}$, $\alpha=0.0097$, $\beta=0.2550$, $\Delta A_0=0.1146$. The initial condition is
\begin{equation}
u_0= \tanh\left( \frac{0.5-(\frac{(x-0.5)^2}{0.35^2} + \frac{(y-0.5)^2}{0.35^2} + \frac{(z-0.5)^2}{0.15^2} )}{\sqrt{2}\epsilon} \right).
\end{equation}
The final steady-state configuration is a four-armed shape in Figure 7.
\begin{figure}[htbp]
\centering
\includegraphics[scale=0.4]{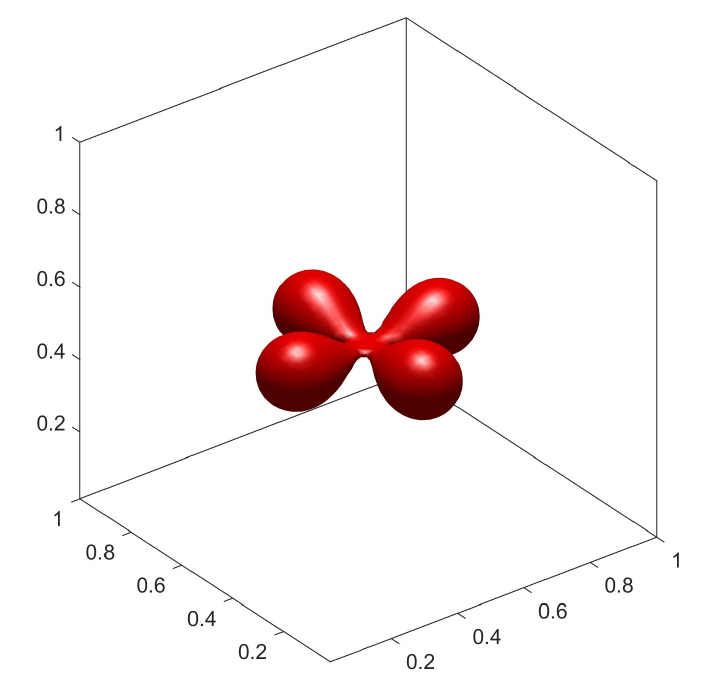}
\caption{four-armed shape}
\end{figure}

(8)Let $\epsilon=0.02$, $\Delta t=1\times10^{-7}$, $\alpha=0.0529$, $\beta=0.7911$, $\Delta A_0=0.1766$. The initial condition is same as in (\ref{4.2}).
The final steady-state configuration is a six-armed shape illustrated in Figure 8.
\begin{figure}[htbp]
\centering
\includegraphics[scale=0.4]{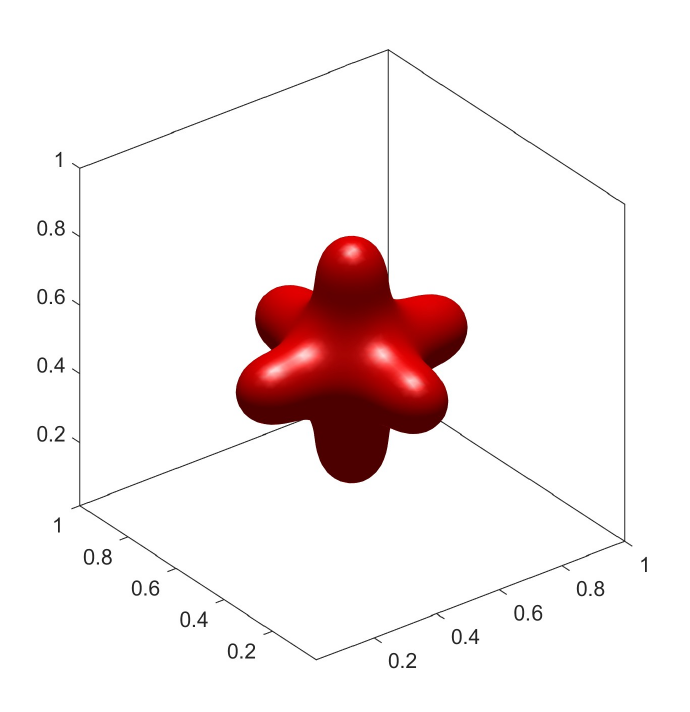}%
\caption{six-armed shape}
\end{figure}

We also explore the formation of nested shapes \cite{salva2013polymersome} within vesicle membranes. By adjusting initial condition, vesicles can form intricate configurations such as multi-chambered structures or internal spherical shapes. As shown in the following example:

(9)Let $\Omega=[0,2]^3$, $h=\frac{1}{50}$, $\epsilon=0.03$, $M_1=M_2=10^4$, $\Delta t=5\times10^{-7}$, $\alpha=0.2693$, $\beta=2.8347$, $\Delta A_0=0.3939$. The initial condition is
\begin{equation}
u_0= \tanh\left( \frac{1-(\frac{(x-1)^2}{0.16} + \frac{(y-1)^2}{0.16} + \frac{(z-1)^2}{0.16} )}{\sqrt{2}\epsilon} \right).
\end{equation}
The final steady-state configuration emerges where two spheres are embedded within one another, forming a nested vesicle(Figure 9). This complex topology showcases how the interplay between membrane constraints can lead to novel geometries, which may be relevant for modeling vesicle fission or cell division processes.
\begin{figure}[htbp]
\centering
\includegraphics[scale=0.4]{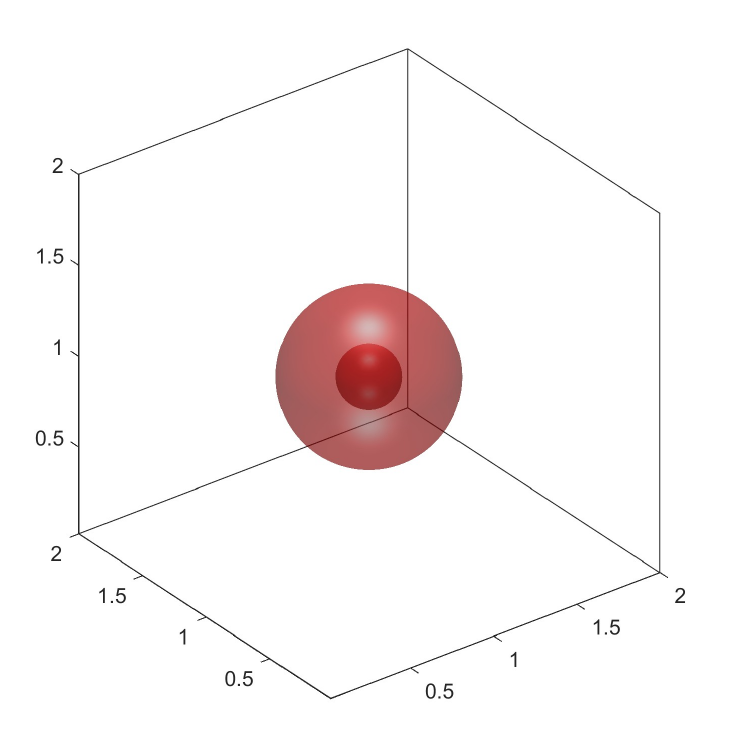}%
\caption{nested shape}
\end{figure}

\section{Conclusion}
In this paper, we developed and numerically implemented a phase-field model for vesicle membranes that incorporates area-difference elasticity while enforcing constraints on volume and surface area.
Our model, solved by an efficient spectral method, successfully predicts a wide range of equilibrium morphologies.
The numerical results demonstrate a clear pathway of shape transformations from simple discocytes to complex structures like gourd shape, cylinders, multi-armed and nested configurations, primarily governed by the competition between bending energy and the ADE constraint.
Future work will focus on further analysis of the capabilities of the proposed model by adopting biologically relevant dimensions of vesicles. As mentioned in \cite{salva2013polymersome}, the sharp interface models fail to capture some experimentally observed steady-state shapes of the vesicles when the vesicle radius is below a threshold value around four times the membrane thickness. One such shape is the nested vesicle, which our formulation was able to capture effectively. We anticipate this to be due to the diffuse interface thickness having a biological relevance, which will be further explored. In addition, the proposed model will be further developed to include the phase field formulation of membrane cytoskeleton properties, essential for capturing the steady-state shapes of red blood cells \cite{lim2002stomatocyte}.

\section{Acknowledgements}
This work was supported by the National Natural Science Foundation of China 12371388.


\bibliographystyle{unsrt}  
\bibliography {reference}

\end{document}